\date{}
\newtheorem{theorem}{Theorem}[section]
\newtheorem{problem}[theorem]{Problem}
\newtheorem{proposition}[theorem]{Proposition}
\newtheorem{lemma}[theorem]{Lemma}
\newtheorem{fact}[theorem]{Fact}
\newtheorem{corollary}[theorem]{Corollary}
\theoremstyle{definition}
\newcommand{\R}{\mathbb R}
\newcommand{\N}{\mathbb N}
\def\Aut{\mathrm{Aut}}
\numberwithin{equation}{section}
\title{Topological entropy for automorphisms of\\totally disconnected locally compact groups}
\author{Anna Giordano Bruno\footnote{Research supported by ``Progetti di Eccellenza 2011/12'' of Fondazione CARIPARO and by INdAM}}
\begin{document}

\maketitle

\begin{abstract}
We give a ``limit-free formula'' simplifying the computation of the topological entropy for topological automorphisms of totally disconnected locally compact groups. This result allows us to extend several basic properties of the topological entropy known to hold for compact groups.
\end{abstract}

\bigskip
\noindent \textrm{\small{Key words: topological entropy, totally disconnected locally compact group\\
2010 AMS Subject Classification: 37B40, 22D05, 22D40, 54H11, 54H20, 54C70.}}

\section{Introduction}

In this paper we consider the topological entropy for topological automorphisms of totally disconnected locally compact groups. 

For a locally compact group $G$ let $\mu$ denote a left Haar measure of $G$. Moreover, it is worth recalling immediately that a totally disconnected locally compact group $G$ has as a local base at $1$ the family $\mathcal B(G)$ of all open compact subgroups of $G$, as proved by van Dantzig in \cite{vD}.

\medskip
In \cite{AKM} Adler, Konheim and McAndrew introduced the topological entropy for continuous selfmaps of compact spaces, while later on Bowen in \cite{B} introduced it for uniformly continuous selfmaps of metric spaces, and this definition was extended to uniformly continuous selfmaps of uniform spaces by Hood in \cite{hood}. In particular, as explained in details in \cite{DG-islam}, for continuous endomorphisms of totally disconnected locally compact groups the topological entropy can be introduced as follows. 

Let $G$ be a totally disconnected locally compact group and $\phi:G\to G$ a continuous endomorphism. For every $U\in\mathcal B(G)$ and every positive integer $n$, the \emph{$n$-th $\phi$-cotrajectory} of $U$ is $$C_n(\phi,U)=U\cap \phi^{-1}(U)\cap\ldots\cap\phi^{-n+1}(U),$$ and the \emph{$\phi$-cotrajectory} of $U$ is
$$C(\phi,U)=\bigcap_{n=0}^\infty\phi^{-n}(U)=\bigcap_{n=1}^\infty C_n(\phi,U).$$
Note that $C_n(\phi,U)\in\mathcal B(G)$ for every positive integer $n$, while $C(\phi,U)$ is compact but not open in general, and it is the greatest $\phi$-invariant subgroup of $G$ contained in $U$. Moreover, for every positive integer $n$ the index $[U:C_n(\phi,U)]$ is finite as $U$ is compact and $C_n(\phi,U)$ is open.

The \emph{topological entropy of $\phi$ with respect to $U$} is given by the following limit, which is proved to exist in Lemma \ref{logalpha} below,
$$H_{top}(\phi,U)=\lim_{n\to \infty}\frac{\log[U:C_n(\phi,U)]}{n}.$$
The \emph{topological entropy} of $\phi$ is $$h_{top}(\phi)=\sup\{H_{top}(\phi,U):U\in\mathcal B(G)\}.$$

\medskip
In Section \ref{lf-sec} we prove the main result of this paper, that is the following limit-free formula for the computation of the topological entropy: if $G$ is a totally disconnected locally compact group, $\phi:G\to G$ is a topological automorphism and $U\in\mathcal B(G)$, then
\begin{equation}\label{lff}
H_{top}(\phi,U)=\log[\phi(C(\phi^{-1},U)):C(\phi^{-1},U)].
\end{equation}
This result extends to locally compact groups the limit-free formula proved in \cite{DG-limitfree} in the compact case. That formula was inspired by its counterpart for the algebraic entropy claimed by Yuzvinski in \cite{Y} and verified in \cite{DG-limitfree}. Indeed, in the abelian case the algebraic entropy and the topological entropy are strictly related by the Pontryagin duality (see \cite{DG-BT2,DG-BT,W}).

\medskip
Under some necessary assumptions, the limit-free formula in the compact case was given for continuous endomorphisms, so we leave the following open problem.

\begin{problem}
Is it possible to extend the limit-free formula in \eqref{lff} to continuous endomorphisms of locally compact groups? Under which assumptions?
\end{problem}

An interesting application of the limit-free formula in \eqref{lff} is given in \cite{BDG}, where it allows for a comparison of the topological entropy with the scale function $s(-)$ introduced by Willis in \cite{Willis,Willis2} for topological automorphisms $\phi:G\to G$ of totally disconnected locally compact groups $G$. Indeed, it follows from the so-called tidying procedure given in \cite{Willis2} that
$$s(\phi)=\min\{[\phi(C(\phi^{-1},U)):C(\phi^{-1},U)]:U\in\mathcal B(G)\};$$
so clearly $\log s(\phi)\leq h_{top}(\phi)$ in view of \eqref{lff}.
Furthermore, in \cite{BDG} a characterization is given of the topological automorphisms $\phi:G\to G$ of totally disconnected locally compact groups for which the equality $\log s(\phi)=h_{top}(\phi)$ holds.

\smallskip
In Section \ref{prop-sec}, as consequences of the limit-free formula in \eqref{lff}, we verify the basic properties of the topological entropy, namely, the so-called Invariance under conjugation, Monotonicity, Logarithmic law, Continuity with respect to inverse limits and weak Addition Theorem. Indeed, these properties are known to hold for continuous endomorphisms of compact groups (see \cite{AKM,St}), and now we extend them to the case of topological automorphisms of totally disconnected locally compact groups.

\medskip
In \cite{St} the so-called Addition Theorem was proved for continuous endomorphisms $\phi:G\to G$ of compact groups; indeed, if $N$ is a closed normal $\phi$-invariant subgroup of $G$ and $\overline \phi:G/N\to G/N$ is the continuous endomorphism induced by $\phi$, then $$h_{top}(\phi)=h_{top}(\phi\restriction_N)+h_{top}(\overline\phi).$$
We leave open the following problem to know whether the Addition Theorem holds in the setting of this paper; one should start studying the abelian case.

\begin{problem}
Let $G$ be a totally disconnected locally compact group, $\phi:G\to G$ a topological automorphism, $N$ a closed normal subgroup of $G$ such that $\phi(N)=N$, and $\overline\phi:G/N\to G/N$ the topological automorphism induced by $\phi$. Is it true that $h_{top}(\phi)=h_{top}(\phi\restriction_N)+h_{top}(\overline\phi)$? 
\end{problem}

\section{Limit-free computation of topological entropy}\label{lf-sec}

For a group $G$ and a subgroup $H$ of $G$, we denote by $G/H$ the set of all (left) cosets of $H$ in $G$, and we denote by $[G:H]$ the index of $H$ in $G$, that is the size of $G/H$. If $K$ is another subgroup of $G$, then we denote by $KH/H$ the family of all (left) cosets of $H$ in $G$ with representing elements in $K$, that is $KH/H=\{kH:k\in K\}$, and we denote by $[KH:H]$ the size of this family, generalizing the usual notion of index. If $H\subseteq K$ then we write simply $K/H$ and $[K:H]$, as usual.

We use several times the following easy-to-check properties. 

\begin{fact}\label{gi}
Let $G$ be a group and let $H,K,L$ be subgroups of $G$, with $H\subseteq K$. Then:
\begin{itemize}
\item[(a)] $[G:H]=[G:K][K:H]$;
\item[(b)] $[LH:H]=[L:H\cap L]$;
\item[(c)] $[K:H]\geq[K\cap L:H\cap L]$.
\end{itemize}
If $N$ is a normal subgroup of $G$, $q:G\to G/N$ is the canonical projection and $N\subseteq H$, then
\begin{itemize}
\item[(d)] $[K:H]=[q(K):q(H)]$.
\end{itemize}
\end{fact}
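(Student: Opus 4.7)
The plan is to prove each of the four items by exhibiting an explicit map between sets of cosets and checking that it is a bijection (for (a), (b), (d)) or an injection (for (c)). All four are standard, but care is needed with the generalized index notation $[LH:H]$ used in (b), since $L$ need not contain $H$; the relation $H\subseteq K$ ensures that the genuine-index notation in (a), (c), (d) makes sense.

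For (a), I would fix a transversal $\{g_i\}$ for $K$ in $G$ and a transversal $\{k_j\}$ for $H$ in $K$, and verify that $\{g_i k_j\}$ is a transversal for $H$ in $G$; uniqueness of the decomposition $g=g_i k_j h$ follows because $H\subseteq K$, and counting gives the formula (both sides may be infinite, in which case the statement is read in cardinal arithmetic). For (b), the map $\varphi:L\to LH/H$ defined by $\ell\mapsto \ell H$ is surjective by the definition of $LH/H$, and $\varphi(\ell)=\varphi(\ell')$ iff $\ell^{-1}\ell'\in H\cap L$, so $\varphi$ descends to a bijection $L/(H\cap L)\to LH/H$, giving (b).

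For (c), since $H\subseteq K$ we have $H\cap L\subseteq K\cap L$, so $[K\cap L:H\cap L]$ is a genuine index. Define $\psi:(K\cap L)/(H\cap L)\to K/H$ by $x(H\cap L)\mapsto xH$. This is well-defined (if $x^{-1}y\in H\cap L\subseteq H$ then $xH=yH$) and injective (if $x,y\in K\cap L$ and $xH=yH$ then $x^{-1}y\in H$ and also $x^{-1}y\in L$, so $x^{-1}y\in H\cap L$). Hence $[K\cap L:H\cap L]\leq [K:H]$. For (d), since $N$ is normal and $N\subseteq H\subseteq K$, one has $q(H)\subseteq q(K)$ and the map $K/H\to q(K)/q(H)$ sending $kH\mapsto q(k)q(H)$ is well-defined (since $N\subseteq H$) and clearly surjective; it is injective because $q(k)q(H)=q(k')q(H)$ implies $k^{-1}k'\in q^{-1}(q(H))=HN=H$.

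The whole statement is elementary and there is no real obstacle; the only point requiring attention is the nonstandard meaning of $[LH:H]$ in (b), which is resolved by working directly with the set $LH/H$ of cosets rather than imposing the usual convention $H\subseteq LH$.
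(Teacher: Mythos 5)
Your proposal is correct. The paper states this Fact without proof, describing the properties as ``easy-to-check,'' and your coset-map arguments (a bijective transversal decomposition for (a), the bijection $L/(H\cap L)\to LH/H$ for (b), the injection for (c), and the bijection induced by $q$ using $q^{-1}(q(H))=HN=H$ for (d)) are exactly the standard verifications the author is implicitly relying on, including the correct handling of the generalized index $[LH:H]$ as the cardinality of the coset family $LH/H$.
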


The following is the counterpart for the topological entropy of \cite[Lemma 1.1]{DGSZ} and \cite[Lemma 2.2]{DGS}; its proof follows the one of \cite[Lemma 3.1]{DG-limitfree}. 

\begin{lemma}\label{logalpha}
Let $G$ be a locally compact group, $\phi:G\to G$ a topological automorphism and $U\in\mathcal B(G)$. For every positive integer $n$ let $c_n:=[U:C_n(\phi,U)]$. Then:
\begin{itemize}
\item[(a)] $c_{n}$ divides $c_{n+1}$ for every $n>0$. 
\end{itemize}
For every $n>0$ let $\alpha_{n}=\frac{c_{n+1}}{c_{n}}=[C_{n}(\phi,U):C_{n+1}(\phi,U)]$. Then:
\begin{itemize}
\item[(b)] $\alpha_{n+1}\leq\alpha_{n}$ for every $n>0$;
\item[(c)] the sequence $\{\alpha_n\}_{n>0}$ stabilizes (i.e., there exist integers $n_0>0$ and $\alpha>0$ such that $\alpha_n=\alpha$ for every $n\geq n_0$);
\item[(d)] $H_{top}(\phi,U)=\log\alpha$.
\end{itemize}
\end{lemma}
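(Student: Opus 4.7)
The plan is to handle parts (a)--(d) in order, each following quickly from the previous one once we notice that the whole argument hinges on the monotonicity statement (b).

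For (a), I would observe that $C_{n+1}(\phi,U)\subseteq C_n(\phi,U)\subseteq U$, since adding one more factor $\phi^{-n}(U)$ only shrinks the intersection. Then Fact \ref{gi}(a) applied to this chain gives $c_{n+1}=c_n\cdot[C_n(\phi,U):C_{n+1}(\phi,U)]=c_n\alpha_n$, which proves divisibility and also records the useful identity $\alpha_n=c_{n+1}/c_n$.

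For (b), which is the heart of the lemma, I would rewrite the cotrajectories as $C_{n+1}(\phi,U)=U\cap\phi^{-1}(C_n(\phi,U))$ and $C_{n+2}(\phi,U)=U\cap\phi^{-1}(C_{n+1}(\phi,U))$. Since $\phi$ is a topological automorphism, $\phi^{-1}$ is a group isomorphism, so taking preimages preserves indices:
\[
[\phi^{-1}(C_n(\phi,U)):\phi^{-1}(C_{n+1}(\phi,U))]=[C_n(\phi,U):C_{n+1}(\phi,U)]=\alpha_n.
\]
Now I would apply Fact \ref{gi}(c) with $K=\phi^{-1}(C_n(\phi,U))$, $H=\phi^{-1}(C_{n+1}(\phi,U))$ (note $H\subseteq K$) and $L=U$; the intersections $K\cap L$ and $H\cap L$ are exactly $C_{n+1}(\phi,U)$ and $C_{n+2}(\phi,U)$, so the fact yields $\alpha_n\geq\alpha_{n+1}$. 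This is the step where the automorphism hypothesis is genuinely used: for a general continuous endomorphism the preimage need not preserve indices, and this is presumably why the limit-free formula in the excerpt is stated only for automorphisms.

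For (c), the sequence $\{\alpha_n\}_{n>0}$ is a weakly decreasing sequence of positive integers (each $\alpha_n$ is a finite index since all $C_k(\phi,U)$ lie in $\mathcal{B}(G)$ with $U$ compact), hence eventually constant equal to some $\alpha>0$. For (d), fix $n_0$ as in (c); then for $n\geq n_0$,
\[
c_n=c_{n_0}\prod_{k=n_0}^{n-1}\alpha_k=c_{n_0}\cdot\alpha^{\,n-n_0},
\]
so $\log c_n=(n-n_0)\log\alpha+\log c_{n_0}$ and $\frac{\log c_n}{n}\to\log\alpha$, which both shows that the limit defining $H_{top}(\phi,U)$ exists and identifies its value as $\log\alpha$. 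The main obstacle is purely conceptual rather than computational: recognizing that (b) is really a statement about pulling back the indexed pair $(C_n,C_{n+1})$ by $\phi^{-1}$ and then intersecting with $U$, so that Fact \ref{gi}(c) applies cleanly.
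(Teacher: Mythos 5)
Your proof is correct, and parts (a), (c) and (d) coincide with the paper's argument (finite decreasing sequence of positive integers stabilizes; $c_n=c_{n_0}\alpha^{n-n_0}$ eventually, so $\frac{\log c_n}{n}\to\log\alpha$). The interesting difference is in (b), the heart of the lemma. The paper works ``forward'': it writes $\alpha_n=[C_n:C_n\cap\phi^{-n}(U)]=[C_n\cdot\phi^{-n}(U):\phi^{-n}(U)]$ via Fact \ref{gi}(b), enlarges $C_n$ to $\phi^{-1}(C_{n-1})$, and then uses the injectivity of the map $G/\phi^{-n}(U)\to G/\phi^{-n+1}(U)$ induced by $\phi$ to push the coset family down one level, landing on $[C_{n-1}:C_n]=\alpha_{n-1}$ after a second application of Fact \ref{gi}(b). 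You instead work ``backward'': you pull the pair $C_{n+1}\subseteq C_n$ through $\phi^{-1}$, use that an automorphism preserves indices, and apply Fact \ref{gi}(c) with $L=U$, using the identities $\phi^{-1}(C_n)\cap U=C_{n+1}$ and $\phi^{-1}(C_{n+1})\cap U=C_{n+2}$ (both correct). Your route is shorter and uses only one of the index facts; the paper's route has the advantage that the key injectivity step does not actually require $\phi$ to be invertible, which is why essentially the same computation appears in the endomorphism setting of the cited source. (Your side remark that the automorphism hypothesis is ``genuinely used'' here is therefore slightly off: for a continuous endomorphism one still has $[\phi^{-1}(C_n):\phi^{-1}(C_{n+1})]\leq[C_n:C_{n+1}]$ because the induced map on cosets is injective, and that inequality points the right way, so your argument would survive; but this does not affect the correctness of what you wrote for automorphisms.)
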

\begin{proof}
Let $n>0$. Since there is no possibility of confusion, in this proof we denote $C_n(\phi,U)$ simply by $C_n$.

\smallskip
(a) Since $U\supseteq C_n\supseteq C_{n+1}$ it follows from Fact \ref{gi}(a) that $[U:C_{n+1}]=[U:C_n][C_n:C_{n+1}]$, and so $\frac{c_{n+1}}{c_n}=[C_{n}:C_{n+1}]$; in particular $c_{n}$ divides $c_{n+1}$.

\smallskip
(b) Since $C_{n+1}=C_n\cap \phi^{-1}(U)$, and using Fact \ref{gi}(b), we have $$[C_n:C_{n+1}]=[C_n:C_n\cap \phi^{-n}(U)]=[C_n\cdot\phi^{-n}(U):\phi^{-n}(U)].$$ Now $C_n\subseteq \phi^{-1}(C_{n-1})$, so $$[C_n\cdot\phi^{-n}(U):\phi^{-n}(U)]\leq[\phi^{-1}(C_{n-1})\cdot\phi^{-n}(U):\phi^{-n}(U)].$$
The map $G/\phi^{-n}(U)\to G/\phi^{-n+1}(U)$ induced by $\phi$ is injective, therefore the family of cosets
$(\phi^{-1}(C_{n-1})\cdot\phi^{-n}(U))/\phi^{-n}(U)$ has the same size of its image $(C_{n-1}\cdot\phi^{-n+1}(U))/\phi^{-n+1}(U)$, and so, applying also Fact \ref{gi}(b) and the fact that $C_n=C_{n-1}\cap \phi^{-n+1}(U)$, we have
\begin{equation*}\begin{split}
[\phi^{-1}(C_{n-1})\cdot\phi^{-n}(U):\phi^{-n}(U)]&=[C_{n-1}\cdot\phi^{-n+1}(U):\phi^{-n+1}(U)]=\\&=[C_{n-1}:C_{n-1}\cap \phi^{-n+1}(U)]=[C_{n-1}:{C_n}].
\end{split}\end{equation*}

\smallskip
(c) follows immediately from (b).

\smallskip
(d) By item (c) for $n_0>0$ we have $c_{n_0+n}=\alpha^n c_{n_0}$ for every $n\geq0$, and by the definition of topological entropy 
$$H_{top}(\phi,U)=\lim_{n\to\infty}\frac{\log c_n}{n}=\lim_{n\to \infty}\frac{\log(\alpha^n c_{n_0})}{n}=\log\alpha,$$
and this concludes the proof.
\end{proof}

Lemma \ref{logalpha} yields that the limit defining the topological entropy exists, and gives a precise description for the value of the topological entropy $H_{top}(\phi,U)$ of a topological automorphism $\phi:G\to G$ of a totally disconnected locally compact group $G$ with respect to $U\in\mathcal B(G)$; in particular, this value is in $\log\N_+$, so the value $h_{top}(\phi)\in\log\N_+\cup\{\infty\}$. 
%

\medskip
We are now in position to prove Theorem \ref{lf+}, that is, the main result of this paper, stated in \eqref{lff} in the Introduction. The following folklore fact is needed in the proof. 

\begin{lemma}\emph{\cite{DG-limitfree}}\label{intersect}
Let $G$ be a topological group and let $T$ be a closed subset of $G$. Then for every descending chain 
$B_1 \supseteq B_2 \supseteq \ldots \supseteq B_n \supseteq  \ldots$ of closed subsets of $G$ the intersection $B = \bigcap_{n=1}^\infty B_n $ is non-empty and 
 $\bigcap_{n=1}^\infty (B_{n} T)= B T$, whenever $B_1$ is countably compact. 
\end{lemma}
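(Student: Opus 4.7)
The plan is to dispatch the two assertions — non-emptiness of $B$ and the identity $\bigcap_n (B_n T) = BT$ — through the same tool, namely the finite-intersection characterization of countable compactness applied inside $B_1$.

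For non-emptiness of $B$, I would note that each $B_n$ is a closed subset of the countably compact set $B_1$ (taking as implicit that the $B_n$ are non-empty, otherwise the statement fails trivially). The chain is descending, so any finite subfamily has the smallest $B_n$ as its intersection and is therefore non-empty. Hence $\{B_n\}_{n\geq 1}$ has the finite intersection property, and countable compactness of $B_1$ forces $B=\bigcap_n B_n$ to be non-empty.

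For the identity, the inclusion $BT\subseteq \bigcap_n (B_n T)$ is immediate from $B\subseteq B_n$. For the reverse, I would fix $x\in \bigcap_n (B_n T)$ and try to produce a single pair $(b,t)\in B\times T$ with $x=bt$. The key move is to rewrite the equation $x=bt$ as $b\in xT^{-1}$: since $T$ is closed and both inversion and left translation by $x$ are homeomorphisms of $G$, the set $xT^{-1}$ is closed in $G$. Setting $F_n:=B_n\cap xT^{-1}$, one obtains a descending chain of closed subsets of $B_1$, and each $F_n$ is non-empty because $x\in B_n T$ provides $b_n\in B_n$ and $t_n\in T$ with $b_n=x t_n^{-1}\in F_n$. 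A second appeal to countable compactness of $B_1$ yields a point $b\in \bigcap_n F_n = B\cap xT^{-1}$, and such a $b$ satisfies $x=bt$ for some $t\in T$, whence $x\in BT$.

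The only subtle point I expect is the passage from the pointwise statement ``$x\in B_n T$ for every $n$'' to the existence of a single $b\in B$ working simultaneously: the trick is to absorb the per-$n$ choice of $b_n$ into a closed set $xT^{-1}$ and exploit countable compactness a second time. The fact that $T$ is merely a closed subset rather than a subgroup plays no role in the argument, beyond the use of the closedness of $T$ together with the continuity of inversion and translation.
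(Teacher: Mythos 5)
Your proof is correct; the paper itself states this lemma without proof (citing \cite{DG-limitfree}), and your argument — the finite-intersection-property characterization of countable compactness for non-emptiness, then a second application to the descending chain $B_n\cap xT^{-1}$ of closed sets to extract a single $b\in B\cap xT^{-1}$ — is exactly the standard argument given in that reference.
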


\begin{theorem}\label{lf+}
Let $G$ be a totally disconnected locally compact group, $\phi:G\to G$ a topological automorphism and $U\in\mathcal B(G)$. Then 
$$H_{top}(\phi,U)=\log[\phi(C(\phi^{-1},U)):C(\phi^{-1},U)].$$
\end{theorem}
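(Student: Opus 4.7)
The plan is to start from Lemma \ref{logalpha}(d), which tells us $H_{top}(\phi,U)=\log\alpha$ where $\alpha=\alpha_n=[C_n(\phi,U):C_{n+1}(\phi,U)]$ for all sufficiently large $n$, and to identify the stable value $\alpha$ with $[\phi(C(\phi^{-1},U)):C(\phi^{-1},U)]$. The key observation is that applying the automorphism $\phi^n$ converts the forward cotrajectories into the backward ones. Setting $V_n:=C_n(\phi^{-1},U)=\bigcap_{k=0}^{n-1}\phi^k(U)$ and $V:=C(\phi^{-1},U)=\bigcap_{k\geq 0}\phi^k(U)$, a direct unwrapping shows $\phi^n(C_n(\phi,U))=\phi(V_n)$ and $\phi^n(C_{n+1}(\phi,U))=V_{n+1}$; since topological automorphisms preserve indices,
\[
\alpha_n=[\phi(V_n):V_{n+1}].
\]

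The next step is to put both $\alpha_n$ and the target $[\phi(V):V]$ into a common form. Observing that $V_{n+1}=U\cap\phi(V_n)$ (from the definition) and $V=U\cap\phi(V)$ (because removing the index $0$ factor from $V$ yields $\phi(V)$, and re-intersecting with $U$ recovers $V$), Fact \ref{gi}(b) gives
\[
\alpha_n=[\phi(V_n)U:U]\qquad\text{and}\qquad [\phi(V):V]=[\phi(V)U:U].
\]
So the theorem reduces to showing $\phi(V_n)U=\phi(V)U$ for all sufficiently large $n$.

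For this final step I would apply Lemma \ref{intersect} to the descending chain $B_n=\phi(V_n)$ of compact subgroups (whose intersection is $\phi(V)$, since $\phi$ is a bijection) with closed subset $T=U$, obtaining the set-theoretic equality $\bigcap_n\phi(V_n)U=\phi(V)U$. The main obstacle is converting this into honest stabilization. The way around it is that each $\phi(V_n)U$ is a union of finitely many left cosets of $U$ — precisely $[\phi(V_n):V_{n+1}]$ of them, which is finite as $V_{n+1}$ is open inside the compact $\phi(V_n)$ — so inside $G/U$ the chain $\phi(V_n)U$ becomes a decreasing chain of finite sets and must stabilize. The stable value equals the intersection, which is $\phi(V)U$. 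Combining with Lemma \ref{logalpha}(d) then yields $H_{top}(\phi,U)=\log\alpha=\log[\phi(V):V]$, which is exactly the desired limit-free formula.
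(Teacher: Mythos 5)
Your proposal is correct and follows essentially the same route as the paper's proof: both reduce to identifying the stable value $\alpha$ from Lemma \ref{logalpha}, rewrite $\alpha_n$ and $[\phi(C(\phi^{-1},U)):C(\phi^{-1},U)]$ as indices of the form $[\cdot\, U:U]$ via Fact \ref{gi}(b), and combine the stabilization of the finite descending chain in $G/U$ with Lemma \ref{intersect} to conclude. The only cosmetic difference is that you apply $\phi^{n}$ to pass from forward to backward cotrajectories while the paper applies $\phi^{-n}$ in the other direction.
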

\begin{proof}
By Lemma \ref{logalpha} there exist integers $n_0>0$ and $\alpha> 0$ such that $\alpha_n=\alpha$ for every $n\geq n_0$, where $\alpha_{n}=[C_{n}(\phi,U):C_{n+1}(\phi,U)]$ for every $n>0$, and $H_{top}(\phi,U)=\log\alpha$. So it suffices to prove that 
\begin{equation}\label{aim}
[\phi(C(\phi^{-1},U)):C(\phi^{-1},U)]=\alpha.
\end{equation}

Since $C(\phi^{-1},U)=U\cap \phi(C(\phi^{-1},U))$, and by Fact \ref{gi}(b), we have
\begin{equation}\label{eqa}\begin{split}
[\phi(C(\phi^{-1},U)):C(\phi^{-1},U)]&=[\phi(C(\phi^{-1},U)):U\cap \phi(C(\phi^{-1},U))]\\&=[\phi(C(\phi,U))\cdot U:U].
\end{split}\end{equation}
The family of cosets $(\phi(U)\cdot U)/U$ is finite, as $U$ is open and $\phi(U)\cdot U$ is compact, since both $U$ and $\phi(U)$ are compact. Consequently, the sequence $\{(\phi(C_n(\phi^{-1},U))\cdot U)/U\}_{n>0}$ is a descending chain of finite subsets of $(\phi(U)\cdot U)/U$ (note that $\phi(C_1(\phi^{-1},U))=\phi(U)$), hence it stabilizes, that is there exists $n_1>0$ such that 
\begin{equation}\label{n1n}
\phi(C_n(\phi^{-1},U))\cdot U=\phi(C_{n_1}(\phi^{-1},U))\cdot U\ \text{for every}\ n\geq n_1;
\end{equation}
in other words, 
\begin{equation}\label{iot}
\bigcap_{n=1}^\infty (\phi(C_{n}(\phi^{-1},U))\cdot U)=\phi(C_{n_1}(\phi^{-1},U))\cdot U.
\end{equation}
Now Lemma \ref{intersect} gives
\begin{equation}\label{lemmagives}
\bigcap_{n=1}^\infty (\phi(C_{n}(\phi^{-1},U))\cdot U)=\left(\bigcap_{n=1}^\infty \phi(C_n(\phi^{-1},U))\right)\cdot U=\phi(C(\phi^{-1},U))\cdot U,
\end{equation}
since $\bigcap_{n=1}^\infty \phi(C_n(\phi^{-1},U))=\phi(C(\phi^{-1},U))$. 
Therefore, \eqref{n1n}, \eqref{iot} and \eqref{lemmagives} yield
\begin{equation}\label{eqb}
\phi(C(\phi^{-1},U))\cdot U=\phi(C_{n_1}(\phi^{-1},U)) \cdot U=\phi(C_{n}(\phi^{-1},U)) \cdot U\ \text{for every}\ n\geq n_1.
\end{equation}
Let $n\geq\max\{n_0,n_1\}$. Then \eqref{eqa} and \eqref{eqb}, together with Fact \ref{gi}(b), give 
\begin{equation}\label{split}\begin{split}
[\phi(C(\phi^{-1},U)):C(\phi^{-1},U)]&=[\phi(C(\phi^{-1},U))\cdot U:U]\\&=[\phi(C_n(\phi^{-1},U))\cdot U:U]\\&=[\phi(C_n(\phi^{-1},U)):\phi(C_n(\phi^{-1},U))\cap U]\\&=[\phi(C_n(\phi^{-1},U)):C_{n+1}(\phi^{-1},U)].
\end{split}\end{equation}
Applying $\phi^{-n}$, we have the equalities 
$$\phi^{-n}(C_{n+1}(\phi^{-1},U))=C_{n+1}(\phi,U)\ \text{and}\ \phi^{-n}(\phi(C_n(\phi^{-1},U)))=C_n(\phi,U),$$ so, since $\phi^{-n}$ is an isomorphism, we have
\begin{equation}\label{lasteq}
[\phi(C_n(\phi^{-1},U)):C_{n+1}(\phi^{-1},U)]=[C_n(\phi,U):C_{n+1}(\phi,U)]=\alpha.
\end{equation}
By \eqref{split} and \eqref{lasteq} we can conclude that \eqref{aim} holds, so the thesis is proved.
\end{proof}

\section{The topological entropy of the inverse automorphism}

For a locally compact group $G$ and a left Haar measure $\mu$ on $G$, the \emph{modulus} is a group homomorphism $\Delta:\mathrm{Aut(G)}\to\R_+$ such that $\mu(\alpha E)=\Delta(\alpha)\mu(E)$ for every topological automorphism $\alpha\in \Aut(G)$ and every measurable subset $E$ of $G$ (see \cite{HR}). If $G$ is compact, then $\Delta(\phi)=1$ for every topological automorphism $\phi:G\to G$.

\begin{lemma}\label{Delta}
Let $G$ be a totally disconnected locally compact group, $\phi:G\to G$ a topological automorphism and $U\in\mathcal B(G)$. Then
\begin{itemize}
\item[(a)]  $\Delta(\phi)=\frac{\mu(\phi(U))}{\mu(U)}=\frac{[\phi(U):U\cap\phi(U)]}{[U:U\cap\phi(U)]}$.
\item[(b)] If $V\in\mathcal B(G)$ and $V\subseteq U\cap \phi(U)$, then $[\phi(U):V]=[U:V]\cdot\Delta(\phi)$.
\end{itemize}
\end{lemma}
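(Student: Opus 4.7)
The plan is to derive (a) directly from the defining property of the modulus together with the left-invariance of Haar measure, and then to obtain (b) as a purely combinatorial consequence of (a) via Fact \ref{gi}(a).

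For the first equality in (a), I would note that since $U\in\mathcal B(G)$ is compact, open and non-empty, one has $0<\mu(U)<\infty$, so the defining identity $\mu(\phi(U))=\Delta(\phi)\mu(U)$ can be solved for $\Delta(\phi)$ to give $\Delta(\phi)=\mu(\phi(U))/\mu(U)$. For the second equality, the key observation is that $W:=U\cap\phi(U)$ is again an element of $\mathcal B(G)$, being the intersection of two open compact subgroups; in particular $W$ has finite index in both $U$ and $\phi(U)$ (as it is open in the compact sets $U$ and $\phi(U)$). Writing $U$ and $\phi(U)$ as the disjoint union of their cosets of $W$ and invoking left-invariance of $\mu$, I would get $\mu(U)=[U:W]\,\mu(W)$ and $\mu(\phi(U))=[\phi(U):W]\,\mu(W)$. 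Dividing gives the second equality, completing (a).

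For (b), since $V\subseteq W\subseteq U$ and $V\subseteq W\subseteq \phi(U)$, I would apply Fact \ref{gi}(a) twice to obtain
\[
[U:V]=[U:W]\cdot[W:V],\qquad [\phi(U):V]=[\phi(U):W]\cdot[W:V].
\]
Both indices are finite (as $V$ is open in the compact groups $U$ and $\phi(U)$), and the factor $[W:V]$ cancels upon taking the ratio. Combining with (a) yields
\[
\frac{[\phi(U):V]}{[U:V]}=\frac{[\phi(U):W]}{[U:W]}=\Delta(\phi),
\]
which rearranges to the claim. There is no genuine obstacle here; the only point that deserves care is checking that all the measures and indices involved are finite and non-zero, which is guaranteed by the fact that $U$, $\phi(U)$, $W$ and $V$ all lie in $\mathcal B(G)$.
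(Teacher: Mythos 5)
Your proposal is correct and follows essentially the same route as the paper: both derive (a) from the coset decompositions $\mu(U)=[U:U\cap\phi(U)]\,\mu(U\cap\phi(U))$ and $\mu(\phi(U))=[\phi(U):U\cap\phi(U)]\,\mu(U\cap\phi(U))$, and both obtain (b) by applying Fact \ref{gi}(a) through the intermediate subgroup $U\cap\phi(U)$ and cancelling the common factor. The extra care you take about finiteness and non-vanishing of the quantities involved is implicit in the paper's argument and is a welcome clarification.
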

\begin{proof}
(a) Follows from the fact that $\mu(\phi(U))=[\phi(U):U\cap \phi(U)]\cdot\mu(U\cap\phi(U))$ and $\mu(U)=[U:U\cap\phi(U)]\cdot \mu(U\cap\phi(U))$, since $U$ and $\phi(U)$ are compact and $U\cap\phi(U)$ is open, and so $[\phi(U):U\cap\phi(U)]$ and $[U:U\cap\phi(U)]$ are finite.

\smallskip
(b) The situation is described by the following diagram:
$$\xymatrix{
U \ar@{-}[dr]& & \phi(U)\ar@{-}[dl] \\
& U\cap\phi(U)\ar@{-}[d] \\
& V &
}$$
By Fact \ref{gi}(a) we have $$[U:V]=[U:U\cap\phi(U)][U\cap\phi(U):V]$$ and $$[\phi(U):V]=[\phi(U):U\cap\phi(U)][U\cap\phi(U):V].$$ Therefore, $$[\phi(U):V]=[U:V]\frac{[\phi(U):U\cap\phi(U)]}{[U:U\cap\phi(U)]}.$$
To conclude apply item (a).
\end{proof}

In the next proposition we give the local relation between the topological entropy of a topological automorphism of a totally disconnected locally compact group and the topological entropy of its inverse. They coincide precisely when the modulus of $\phi$ is $1$. So for example, as known, in the compact case a topological automorphism has the same topological entropy as its inverse.

\begin{proposition}\label{^-1}
Let $G$ be a totally disconnected locally compact group, $\phi:G\to G$ a topological automorphism and $U\in\mathcal B(G)$. Then
$$H_{top}(\phi^{-1},U)= H_{top}(\phi,U)-\log\Delta(\phi).$$
Hence, $$h_{top}(\phi^{-1})=h_{top}(\phi)-\log\Delta(\phi).$$
\end{proposition}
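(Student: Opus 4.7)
The plan is to work from the definition of topological entropy (not from Theorem \ref{lf+}), exploiting the fact that the isomorphism $\phi^{n-1}$ carries $C_n(\phi,U)$ bijectively onto $C_n(\phi^{-1},U)$ while scaling Haar measure by the factor $\Delta(\phi)^{n-1}$.

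First I would establish the key identity: applying $\phi^{n-1}$ term-by-term to the intersection $C_n(\phi,U)=U\cap\phi^{-1}(U)\cap\cdots\cap\phi^{-(n-1)}(U)$ yields
$$\phi^{n-1}(C_n(\phi,U)) = \phi^{n-1}(U)\cap\phi^{n-2}(U)\cap\cdots\cap U = C_n(\phi^{-1},U).$$
Since $\phi^{n-1}$ is a topological isomorphism, it preserves indices of subgroups, so writing $c_n := [U:C_n(\phi,U)]$ and $c'_n := [U:C_n(\phi^{-1},U)]$, we obtain $c_n = [\phi^{n-1}(U) : C_n(\phi^{-1},U)]$.

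Next I would iterate Lemma \ref{Delta}(b) by an easy induction on $k\geq 0$: for every $V\in\mathcal B(G)$ with $V\subseteq\bigcap_{i=0}^{k}\phi^i(U)$, the equality $[\phi^k(U):V] = [U:V]\cdot\Delta(\phi)^k$ holds (the inductive step applies Lemma \ref{Delta}(b) with $\phi^k(U)$ in place of $U$, which is legal because Lemma \ref{Delta}(b) is stated for an arbitrary $U\in\mathcal B(G)$). Taking $V=C_n(\phi^{-1},U)=\bigcap_{i=0}^{n-1}\phi^i(U)\in\mathcal B(G)$ and $k=n-1$ then gives $c_n = c'_n\cdot\Delta(\phi)^{n-1}$. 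Dividing $\log c_n$ by $n$ and letting $n\to\infty$ produces
$$H_{top}(\phi,U) = H_{top}(\phi^{-1},U) + \log\Delta(\phi),$$
which rearranges to the first displayed equation. The second statement follows by taking the supremum over $U\in\mathcal B(G)$ on both sides, since $\log\Delta(\phi)$ does not depend on $U$ and $\mathcal B(G)$ is the same base of neighborhoods used for $\phi^{-1}$.

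The only mildly delicate step will be the iteration of Lemma \ref{Delta}(b), but this is routine bookkeeping. The real content is the identity $\phi^{n-1}(C_n(\phi,U))=C_n(\phi^{-1},U)$, which makes transparent that $\phi$ and $\phi^{-1}$ differ entropically only by the non-unimodularity factor $\Delta(\phi)$; in particular, no appeal to the limit-free formula of Theorem \ref{lf+} is needed here.
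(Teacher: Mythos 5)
Your proof is correct and follows essentially the same route as the paper's: the paper likewise relates $c_n=[U:C_n(\phi,U)]$ to $c_n^*=[U:C_n(\phi^{-1},U)]$ by pushing $C_n(\phi,U)$ forward onto $C_n(\phi^{-1},U)$ with a power of $\phi$ and invoking Lemma \ref{Delta}(b), obtaining $c_n=c_n^*\cdot\Delta(\phi)^{n}$ (your exponent $n-1$ is in fact the correct one, though the discrepancy cancels in the limit) before concluding. The only cosmetic difference is that the paper forms the ratio $c_{n+1}/c_n$ and uses the stabilized value $\alpha$ from Lemma \ref{logalpha}, whereas you divide $\log c_n$ by $n$ and pass to the limit directly.
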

\begin{proof}
For every positive integer $n$ let $c_n=[U:C_n(\phi,U)]$ and $c_n^*=[U:C_n(\phi^{-1},U)]$.
According to items (a)--(c) of Lemma \ref{logalpha} applied to $\phi$ and $\phi^{-1}$ respectively, $H_{top}(\phi,U)=\log\alpha$ and $H_{top}(\phi^{-1},U)=\log \alpha^*$, where $\alpha$ is the value at which stabilizes the sequence $\alpha_n=\frac{c_{n+1}}{c_{n}}$ and $\alpha^*$ is the value at which stabilizes the sequence $\alpha^*_n=\frac{c_{n+1}^*}{c_{n}^*}$. For every $n>0$, since $\phi^n(C_n(\phi,U))=C_n(\phi^{-1},U)$, using that $\phi^n$ is an automorphism, and by Lemma \ref{Delta}(b), we have that
\begin{equation*}\begin{split}
c_n &=[U:C_n(\phi,U)]=[\phi^n(U):\phi^n(C_n(\phi,U))]=\\&=[\phi^n(U):C_n(\phi^{-1},U))=[U:C_n(\phi^{-1},U)]\cdot\Delta(\phi^n)=c_n^*\cdot \Delta(\phi^n).
\end{split}\end{equation*}
Therefore, since $\Delta$ is a homomorphism, $$\alpha=\frac{c_{n+1}}{c_n}=\frac{c_{n+1}^*}{c_n^*}\cdot\frac{\Delta(\phi^{n+1})}{\Delta(\phi^n)}=\alpha^*\Delta(\phi),$$ hence we have the equality $\log\alpha=\log\alpha^*+\log\Delta(\phi)$, which gives the first assertion of the proposition.

The second equality in the thesis follows from the first one taking the supremum for $U$ ranging in $\mathcal B(G)$.
\end{proof}

The following is an immediate consequence of Theorem \ref{lf+} and Proposition \ref{^-1}.

\begin{corollary}\label{lf-}
Let $G$ be a totally disconnected locally compact group, $\phi:G\to G$ a topological automorphism and $U\in\mathcal B(G)$. Then 
$$H_{top}(\phi,U)=\log[\phi^{-1}(C(\phi,U)):C(\phi,U)]+\log\Delta(\phi).$$
\end{corollary}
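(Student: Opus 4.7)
The plan is to apply Theorem \ref{lf+} not to $\phi$ itself but to the inverse automorphism $\phi^{-1}$, and then use Proposition \ref{^-1} to convert back. Since $\phi^{-1}$ is itself a topological automorphism of $G$, Theorem \ref{lf+} is applicable to it and yields
$$H_{top}(\phi^{-1},U)=\log[\phi^{-1}(C(\phi,U)):C(\phi,U)],$$
where I have used that the cotrajectory $C((\phi^{-1})^{-1},U)$ appearing after the substitution is by definition $C(\phi,U)$, and that $(\phi^{-1})$ applied to it gives $\phi^{-1}(C(\phi,U))$.

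Next I would invoke Proposition \ref{^-1}, rewritten as
$$H_{top}(\phi,U) = H_{top}(\phi^{-1},U) + \log\Delta(\phi),$$
and substitute the expression for $H_{top}(\phi^{-1},U)$ obtained in the previous step. This immediately yields the claimed formula
$$H_{top}(\phi,U)=\log[\phi^{-1}(C(\phi,U)):C(\phi,U)]+\log\Delta(\phi).$$

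There is essentially no obstacle here: the work has already been done in Theorem \ref{lf+} (the limit-free formula itself) and Proposition \ref{^-1} (the modulus correction relating $\phi$ and $\phi^{-1}$). The only minor point to be careful about is the syntactic substitution $\phi \leftrightarrow \phi^{-1}$ in the limit-free formula, making sure that $C(\phi^{-1},U)$ becomes $C(\phi,U)$ under the swap. Once that bookkeeping is done, the corollary follows in two lines.
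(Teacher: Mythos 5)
Your proof is correct and follows exactly the same route as the paper: apply Theorem \ref{lf+} to the automorphism $\phi^{-1}$ (so that $C((\phi^{-1})^{-1},U)=C(\phi,U)$) and then add the modulus correction from Proposition \ref{^-1}. Your careful handling of the substitution is in fact slightly cleaner than the paper's own write-up, whose displayed formula contains a typographical slip at that step.
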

\begin{proof}
Theorem \ref{lf+} applied to $\phi^{-1}$ gives $$H_{top}(\phi^{-1},U)=\log[\phi(C(\phi^{-1},U)):C(\phi^{-1},U)].$$ Moreover, Proposition \ref{^-1} yields $H_{top}(\phi,U)= H_{top}(\phi^{-1},U)+\log\Delta(\phi)$, hence the thesis follows.
\end{proof}

\section{Properties of topological entropy}\label{prop-sec}

In this section we consider the basic properties of the topological entropy. When the group is compact it is known that they hold, but thanks to the limit-free formula proved in Theorem \ref{lf+} it is possible to extend them to the case of topological automorphisms of totally disconnected locally compact groups.

\medskip
We start proving in our setting that the function $H_{top}(\phi,-)$ is antimonotone. It is already known from \cite{DG-islam}.

\begin{lemma}\label{antim}
Let $G$ be a totally disconnected locally compact groupa and $\phi:G\to G$ a continuous endomorphism.
For any $U,V\in\mathcal B(K)$, if $U\subseteq V$, then $H_{top}(\phi,V)\leq H_{top}(\phi,U)$.
\end{lemma}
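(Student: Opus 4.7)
The plan is to prove the stronger finite-level inequality
\begin{equation*}
[V:C_n(\phi,V)]\leq [V:U]\cdot [U:C_n(\phi,U)]\quad\text{for every } n>0,
\end{equation*}
and then to take logarithms, divide by $n$, and pass to the limit, at which point the constant factor $[V:U]$ disappears and Lemma \ref{logalpha} delivers the desired inequality $H_{top}(\phi,V)\leq H_{top}(\phi,U)$.

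To establish the displayed index inequality, I would start from the observation that $U\subseteq V$ forces $\phi^{-i}(U)\subseteq \phi^{-i}(V)$ for every $i\geq 0$, and hence $C_n(\phi,U)\subseteq C_n(\phi,V)$ for every $n>0$. Since also $C_n(\phi,U)\subseteq U\subseteq V$, the index $[V:C_n(\phi,U)]$ admits two different chain decompositions via Fact \ref{gi}(a): going through $U$ it equals $[V:U]\cdot [U:C_n(\phi,U)]$, while going through $C_n(\phi,V)$ it equals $[V:C_n(\phi,V)]\cdot [C_n(\phi,V):C_n(\phi,U)]$. Equating the two expressions and dropping the last factor, which is at least $1$, yields the claimed inequality.

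The finiteness of $[V:U]$, needed so that $\log[V:U]/n\to 0$, is automatic since $V$ is compact and $U$ is an open subgroup of $V$, so the cosets of $U$ in $V$ form a finite open partition of $V$. There is no real obstacle in this argument: it is pure index bookkeeping combined with the monotonicity $\phi^{-i}(U)\subseteq\phi^{-i}(V)$, and in particular it does not appeal to the limit-free formula of Theorem \ref{lf+} — which is appropriate, since the lemma is stated for arbitrary continuous endomorphisms, not only topological automorphisms.
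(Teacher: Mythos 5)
Your proposal is correct and follows essentially the same route as the paper: the same finite-level inequality $[V:C_n(\phi,V)]\leq[V:C_n(\phi,U)]=[V:U]\,[U:C_n(\phi,U)]$ obtained from $C_n(\phi,U)\subseteq C_n(\phi,V)$ and Fact \ref{gi}(a), followed by taking logarithms, dividing by $n$ and letting $n\to\infty$. Your extra remarks (the explicit chain decomposition justifying the first inequality and the finiteness of $[V:U]$) only spell out details the paper leaves implicit.
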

\begin{proof}
Since $U\subseteq V$, we have that $C_n(\phi,U)\subseteq C_n(\phi,V)$ for every $n>0$. Then, also by Fact \ref{gi}(a) we have for every $n>0$
$$[V:C_n(\phi,V)]\leq[V:C_n(\phi,U)]=[V:U][U:C_n(\phi,U)].$$
Applying the definition of topological entropy, that is, taking the logarithm, dividing by $n$ and letting $n\to\infty$, we find that $H_{top}(\phi,V)\leq H_{top}(\phi,U)$.
\end{proof}

By the definition of topological entropy we immediately derive from Lemma \ref{antim} that, in order to compute the ``global'' topological entropy, it suffices to take the supremum when $U$ ranges in a local base at $1$ of $K$:

\begin{corollary}\label{basesuff}
Let $G$ be a totally disconnected locally compact group, $\phi:G\to G$ a continuous endomorphism and $\mathcal B\subseteq \mathcal B(G)$ a local base at $1$. Then $h_{top}(\phi)=\sup\{H_{top}(\phi,U):U\in\mathcal B\}$.
\end{corollary}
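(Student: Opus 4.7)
The strategy is to prove the identity $h_{top}(\phi)=\sup\{H_{top}(\phi,U):U\in\mathcal B\}$ as a pair of opposite inequalities, and all of the content is already packaged in Lemma \ref{antim} together with the local base property of $\mathcal B$.

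The easy inequality $\sup\{H_{top}(\phi,U):U\in\mathcal B\}\leq h_{top}(\phi)$ is immediate from the definition of $h_{top}(\phi)$: since $\mathcal B\subseteq\mathcal B(G)$, every term appearing in the sup on the left also appears in the sup defining $h_{top}(\phi)$.

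For the reverse inequality, I would take an arbitrary $V\in\mathcal B(G)$. Since $V$ is in particular an open neighborhood of $1$ and $\mathcal B$ is a local base at $1$, there exists $U\in\mathcal B$ with $U\subseteq V$. Applying Lemma \ref{antim} to this inclusion yields $H_{top}(\phi,V)\leq H_{top}(\phi,U)\leq \sup\{H_{top}(\phi,U'):U'\in\mathcal B\}$. Taking the supremum of the left-hand side as $V$ ranges over $\mathcal B(G)$ gives $h_{top}(\phi)\leq\sup\{H_{top}(\phi,U'):U'\in\mathcal B\}$, and the two inequalities combine to the desired equality.

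There is no genuine obstacle here — the corollary is essentially a direct packaging of the antimonotonicity lemma with the definition of a local base, and the only point to be careful about is that elements of $\mathcal B$ sit inside elements of $\mathcal B(G)$ (not the other way around), which is exactly the situation in which Lemma \ref{antim} gives the inequality with the right orientation.
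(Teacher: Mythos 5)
Your proof is correct and is exactly the argument the paper intends: the corollary is stated without an explicit proof, derived ``immediately'' from Lemma \ref{antim} and the definition of $h_{top}$, and your two-inequality argument (trivial inclusion of suprema in one direction, antimonotonicity applied to some $U\in\mathcal B$ with $U\subseteq V$ in the other) is precisely that derivation spelled out.
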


We start proving that the topological entropy is invariant under conjugation.

\begin{proposition}\label{conj}
Let $G$ be a totally disconnected locally compact group and $\phi:G\to G$ a topological automorphism. Let $H$ be another totally disconnected locally compact group and $\xi:G\to H$ a topological isomorphism. Then $\xi(C(\phi,U))=C(\xi\phi\xi^{-1},\xi(U))$ for every $U\in\mathcal B(G)$, and so $h_{top}(\phi)=h_{top}(\xi\phi\xi^{-1})$.
\end{proposition}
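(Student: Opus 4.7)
The plan is to use the limit-free formula from Theorem \ref{lf+} together with the observation that $\xi$ is simultaneously a group isomorphism and a homeomorphism, hence it transports every piece of structure used in the formula.

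First, I would observe that $\xi$ restricts to a bijection $\mathcal B(G)\to \mathcal B(H)$ given by $U\mapsto \xi(U)$, because $\xi$ is a topological isomorphism and so it sends open compact subgroups to open compact subgroups in both directions. Moreover, since $\xi$ is a group isomorphism, for any subgroups $H_1\supseteq H_2$ of $G$ one has $[\xi(H_1):\xi(H_2)]=[H_1:H_2]$, and more generally $\xi$ preserves indices of the form $[KH:H]$ used in the paper.

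Next, I would prove the cotrajectory identity directly. Writing $\psi:=\xi\phi\xi^{-1}$, an induction on $n\geq 0$ gives $\psi^{-n}=\xi\circ\phi^{-n}\circ\xi^{-1}$, so for every $U\in\mathcal B(G)$
$$\psi^{-n}(\xi(U))=\xi(\phi^{-n}(U)).$$
Since $\xi$ is a bijection, it commutes with arbitrary intersections, hence
$$C(\psi,\xi(U))=\bigcap_{n=0}^\infty \psi^{-n}(\xi(U))=\xi\!\left(\bigcap_{n=0}^\infty \phi^{-n}(U)\right)=\xi(C(\phi,U)),$$
which is the first claim of the proposition.

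For the entropy equality, I would apply the same cotrajectory identity to the topological automorphism $\phi^{-1}$ (and its conjugate $\psi^{-1}=\xi\phi^{-1}\xi^{-1}$), obtaining $C(\psi^{-1},\xi(U))=\xi(C(\phi^{-1},U))$. Then Theorem \ref{lf+} applied to $\psi$ with the open compact subgroup $\xi(U)\in\mathcal B(H)$ gives
$$H_{top}(\psi,\xi(U))=\log[\psi(C(\psi^{-1},\xi(U))):C(\psi^{-1},\xi(U))]=\log[\xi(\phi(C(\phi^{-1},U))):\xi(C(\phi^{-1},U))],$$
and since $\xi$ preserves indices this last quantity equals $\log[\phi(C(\phi^{-1},U)):C(\phi^{-1},U)]=H_{top}(\phi,U)$. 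Taking the supremum as $U$ ranges over $\mathcal B(G)$ (equivalently, as $\xi(U)$ ranges over $\mathcal B(H)$ via the bijection established above) yields $h_{top}(\phi)=h_{top}(\psi)$.

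There is no real obstacle here: the argument is essentially a bookkeeping exercise, and the only point worth stating clearly is that $\xi$ simultaneously preserves the poset of open compact subgroups, the intersection operation defining cotrajectories, the action of the dynamical map (by the very definition of conjugation), and the indices of subgroups. All these make the limit-free formula in Theorem \ref{lf+} transparently invariant under conjugation by $\xi$.
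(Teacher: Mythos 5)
Your proposal is correct and follows essentially the same route as the paper: establish $\xi(C(\phi,U))=C(\psi,\xi(U))$, apply it to $\phi^{-1}$ and $\psi^{-1}$, invoke Theorem \ref{lf+}, use that $\xi$ preserves indices, and take the supremum over the bijection $\mathcal B(G)\to\mathcal B(H)$. You merely spell out the details (the computation of $\psi^{-n}$ and the commutation of $\xi$ with intersections) that the paper dismisses as clear.
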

\begin{proof}
Let $\psi=\xi\phi\xi^{-1}$. It is clear that, for $U\in\mathcal B(G)$,
\begin{equation}\label{uff}
\xi(C(\phi,U))=C(\psi,\xi(U)).
\end{equation}

Since $\xi:G\to H$ is a topological isomorphism, it induces a bijection $\mathcal B(G)\to \mathcal B(H)$, so by Theorem \ref{lf+} it suffices to check that 
\begin{equation}\label{suff}
[\phi(C(\phi^{-1},U)):C(\phi^{-1},U)]=[\psi(C(\psi^{-1},\xi(U))):C(\psi^{-1},\xi(U))].
\end{equation}
From \eqref{uff} applied to $\phi^{-1}$ and $\psi^{-1}$ we have $C(\psi^{-1},\xi(U))=\xi(C(\phi^{-1},U))$, so, using also the fact that $\xi$ is an isomorphism,
\begin{equation*}\begin{split}
[\psi(C(\psi^{-1},\xi(U))):C(\psi^{-1},\xi(U))]=[\psi(\xi(C(\phi^{-1},U))):\xi(C(\phi^{-1},U))]=\\
=[\xi(\phi(C(\phi^{-1},U))):\xi(C(\phi^{-1},U))]=[\phi(C(\phi^{-1},U)):C(\phi^{-1},U)];
\end{split}\end{equation*}
then \eqref{suff} is verified and this concludes the proof.
\end{proof}

In the next proposition we prove that the monotonicity of the topological entropy under taking subgroups and quotients.


\begin{proposition}\label{monotonicity}
Let $G$ be a totally disconnected locally compact group, $\phi:G\to G$ a topological automorphism and $N$ a closed normal subgroup of $G$ such that $\phi(N)=N$, and let $\overline\phi:G/N\to G/N$ be the topological automorphism induced by $\phi$. Then:
\begin{itemize}
\item[(a)] $C(\phi\restriction_N,U\cap N)=C(\phi,U)\cap N$ for every $U\in\mathcal B(G)$, and $h_{top}(\phi)\geq h_{top}(\phi\restriction_N)$;
\item[(b)] $C(\overline\phi,q(U))=q(C(\phi,U))$ for every $U\in\mathcal B(G)$ such that $N\subseteq U$, where $q:G\to G/N$ is the canonical projection; moreover, $h_{top}(\phi)\geq h_{top}(\overline\phi)$.
\end{itemize}
\end{proposition}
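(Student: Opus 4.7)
The plan is to establish the two cotrajectory identities first and then read off the entropy inequalities, in each case via one of the set-theoretic identities of Fact \ref{gi} followed by Corollary \ref{basesuff} applied to a natural local base.

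For part (a), the hypothesis $\phi(N)=N$ gives $(\phi\restriction_N)^{-i}(U\cap N)=\phi^{-i}(U)\cap N$ for every $i\geq 0$, so intersecting over $i$ yields simultaneously $C_n(\phi\restriction_N,U\cap N)=C_n(\phi,U)\cap N$ and $C(\phi\restriction_N,U\cap N)=C(\phi,U)\cap N$. Fact \ref{gi}(c) then gives
$$
[U\cap N:C_n(\phi\restriction_N,U\cap N)]=[U\cap N:C_n(\phi,U)\cap N]\leq [U:C_n(\phi,U)],
$$
and letting $n\to\infty$ this yields $H_{top}(\phi\restriction_N,U\cap N)\leq H_{top}(\phi,U)$. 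Since $\{U\cap N:U\in\mathcal B(G)\}$ is a local base at $1$ in $N$ (the inclusion $N\hookrightarrow G$ is a topological embedding), Corollary \ref{basesuff} gives $h_{top}(\phi\restriction_N)\leq h_{top}(\phi)$.

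For part (b), one first observes that $\overline\phi^{-n}(q(U))=q(\phi^{-n}(U))$ holds for arbitrary $U\subseteq G$, since $q(AN)=q(A)$ and $\phi^{-n}(N)=N$. When $N\subseteq U$, each $\phi^{-n}(U)$ is a union of $N$-cosets, so the canonical projection $q$ commutes with the intersection of such sets and thus $C(\overline\phi,q(U))=q(C(\phi,U))$. For the entropy inequality the hypothesis $N\subseteq U$ is not needed: for every $U\in\mathcal B(G)$ one has $C_n(\overline\phi,q(U))\supseteq q(C_n(\phi,U))$, whence by Fact \ref{gi}(b) (applied with $H=C_n(\phi,U)N$ and $L=U$)
$$
[q(U):C_n(\overline\phi,q(U))]\leq [q(U):q(C_n(\phi,U))]=[UN:C_n(\phi,U)N]=[U:U\cap C_n(\phi,U)N]\leq [U:C_n(\phi,U)],
$$
giving $H_{top}(\overline\phi,q(U))\leq H_{top}(\phi,U)$. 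Since $q$ is a continuous open surjection, the family $\{q(U):U\in\mathcal B(G)\}$ is a local base at $1$ in $G/N$, and Corollary \ref{basesuff} concludes $h_{top}(\overline\phi)\leq h_{top}(\phi)$.

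The main subtlety is that for non-compact $N$ there is in general no $U\in\mathcal B(G)$ with $N\subseteq U$ refining a given $V\in\mathcal B(G/N)$; consequently one cannot deduce the entropy inequality in (b) purely from the cotrajectory identity (together with the limit-free formula of Theorem \ref{lf+}) and antimonotonicity. The direct $C_n$-level estimate above, which makes no containment assumption on $U$, is precisely what bridges this gap.
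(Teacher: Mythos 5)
Your proof is correct, but it takes a genuinely different route from the paper's, and in one place it is actually more careful. The paper derives both entropy inequalities from the limit-free formula of Theorem \ref{lf+}: for (a) it applies Fact \ref{gi}(c) to the single index $[\phi(C(\phi^{-1},U)\cap N):C(\phi^{-1},U)\cap N]\leq[\phi(C(\phi^{-1},U)):C(\phi^{-1},U)]$, and for (b) it uses the cotrajectory identity together with Fact \ref{gi}(d) to show $H_{top}(\overline\phi,q(U))=H_{top}(\phi,U)$ \emph{after} ``assuming without loss of generality that $N\subseteq U$''. You instead work directly at the level of the finite indices $[U:C_n(\phi,U)]$ and pass to the limit, which avoids Theorem \ref{lf+} altogether and is more elementary. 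The payoff is exactly the subtlety you flag at the end: when $N$ is not compact, no $U\in\mathcal B(G)$ can contain the closed subgroup $N$, so the paper's ``WLOG $N\subseteq U$'' reduction in (b) is not available (replacing $U$ by $UN$ destroys compactness), and the limit-free route as written only covers compact $N$. Your $C_n$-level estimate $[q(U):C_n(\overline\phi,q(U))]\leq[U:C_n(\phi,U)]$, valid for arbitrary $U\in\mathcal B(G)$, closes that gap; combined with Corollary \ref{basesuff} applied to the local base $\{q(U):U\in\mathcal B(G)\}$ it yields $h_{top}(\overline\phi)\leq h_{top}(\phi)$ in full generality. The cotrajectory identities themselves you prove exactly as one would expect (saturation of $\phi^{-n}(U)$ under $N$-cosets for (b)); the paper leaves these as ``clear''/``straightforward''. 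In short: same statement, sound argument, and a strictly more robust proof of the quotient inequality in (b).
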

\begin{proof}
(a) The first assertion is clear. Let $V\in \mathcal B(N)$. Then $V= U\cap N$ for some $U\in\mathcal B(G)$. We verify that $$H_{top}(\phi\restriction_N,V)\leq H_{top}(\phi,U)$$ and this implies the wanted inequality.
By the first assertion in item (a), $C((\phi\restriction_N)^{-1},V)=C(\phi^{-1},U)\cap N$, so Fact \ref{gi}(c) gives
\begin{equation*}\begin{split}
[\phi\restriction_N(C((\phi\restriction_N)^{-1},V)):C((\phi\restriction_N)^{-1},V)]&=[\phi(C(\phi^{-1},U)\cap N):C(\phi^{-1},U)\cap N]\\&\leq[\phi(C(\phi^{-1},U)):C(\phi^{-1},U)].
\end{split}\end{equation*}
By Theorem \ref{lf+}, taking the logarithms, this inequality gives $H_{top}(\phi\restriction_N,V)\leq H_{top}(\phi,U)$.

\smallskip
(b) The proof of the first assertion is straightforward. Let $q:G\to G/N$ be the canonical projection and $q(U)\in\mathcal B(G/N)$, where $U\in\mathcal B(G)$; we can assume without loss of generality that $N\subseteq U$. Since $N\subseteq C(\phi^{-1},U)$, by the first assertion in item (b) and Fact \ref{gi}(d) we have 
\begin{equation*}\begin{split}
[\overline\phi(C(\overline\phi^{-1},q(U))):C(\overline\phi^{-1},q(U))]=[\overline\phi(q(C(\phi^{-1},U))):q(C(\phi^{-1},U))]=\\
=[q(\phi(C(\phi^{-1},U))):q(C(\phi^{-1},U))]]=[\phi(C(\phi^{-1},U)):C(\phi^{-1},U)].
\end{split}\end{equation*}
Theorem \ref{lf+} applies to this equality and gives $H_{top}(\overline\phi,q(U))=H_{top}(\phi,U)$, hence we can conclude that $h_{top}(\overline\phi)\leq h_{top}(\phi)$.
\end{proof}

We go on proving the so-called Logarithmic law for the topological entropy. Note that in the compact case $h_{top}(\phi)=h_{top}(\phi^{-1})$ and so the Logarithmic law extends to all integers. This is not possible in general in the locally compact case, as follows immediately from Proposition \ref{^-1}, where the modulus of $\phi$ is involved.

\begin{proposition}
Let $G$ be a totally disconnected locally compact group, $\phi:G\to G$ a topological automorphism and $k$ a positive integer. Then $h_{top}(\phi^k)=k\cdot h_{top}(\phi)$.
\end{proposition}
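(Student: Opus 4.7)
The strategy is to bypass the limit-free formula and work directly with cotrajectories, exploiting a ``subdividing'' identity that relates $C_n(\phi^k, V)$ with $C_{nk}(\phi, U)$ for a suitable refinement $V$ of $U$. Specifically, for any $U \in \mathcal{B}(G)$, I would set $V := C_k(\phi, U) \in \mathcal{B}(G)$ and claim
\[
C_n(\phi^k, V) = C_{nk}(\phi, U) \quad \text{for every } n \geq 1.
\]
This is pure book-keeping: expanding $V = \bigcap_{j=0}^{k-1} \phi^{-j}(U)$ and $\phi^{-ki}(V) = \bigcap_{j=0}^{k-1} \phi^{-(ki+j)}(U)$ (here using that $\phi^{-ki}$ is an automorphism and therefore commutes with intersections), the indices $ki+j$ with $0 \leq i \leq n-1$ and $0 \leq j \leq k-1$ enumerate exactly $\{0, 1, \ldots, nk-1\}$.

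Granted this identity, the lower bound $h_{top}(\phi^k) \geq k \cdot h_{top}(\phi)$ follows quickly. By Fact~\ref{gi}(a), $[V : C_{nk}(\phi, U)] = [U : C_{nk}(\phi, U)]/[U : V]$; dividing by $n$, the constant term $\log[U:V]/n$ vanishes in the limit, while
\[
\frac{1}{n}\log[U : C_{nk}(\phi, U)] = k \cdot \frac{1}{nk}\log[U : C_{nk}(\phi, U)] \longrightarrow k \cdot H_{top}(\phi, U).
\]
Hence $H_{top}(\phi^k, V) = k \cdot H_{top}(\phi, U)$, and taking suprema over $U \in \mathcal{B}(G)$ (noting that $V \in \mathcal{B}(G)$) yields the desired bound.

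For the reverse inequality, I would use the obvious containment $C_{nk}(\phi, U) \subseteq C_n(\phi^k, U)$, since the left-hand side is an intersection over a superset of indices. This gives $[U : C_n(\phi^k, U)] \leq [U : C_{nk}(\phi, U)]$; dividing by $n$ and letting $n \to \infty$ yields $H_{top}(\phi^k, U) \leq k \cdot H_{top}(\phi, U)$ for every $U \in \mathcal{B}(G)$, and taking suprema gives $h_{top}(\phi^k) \leq k \cdot h_{top}(\phi)$.

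I do not expect a serious obstacle: the argument parallels the classical compact-group case and requires only the definition of topological entropy together with the elementary index arithmetic of Fact~\ref{gi}; the limit-free formula is not needed. The only point demanding some care is the verification of the subdividing identity $C_n(\phi^k, V) = C_{nk}(\phi, U)$, which crucially uses that $\phi$ is an automorphism so that its iterates commute with intersections of subgroups.
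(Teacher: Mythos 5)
Your proof is correct, but it takes a genuinely different route from the paper's. The paper deliberately derives this Logarithmic Law from the limit-free formula of Theorem~\ref{lf+}: it sets $C=C(\phi^{-1},U)$, observes $C=C(\phi^{-k},V)$ with $V=C_k(\phi^{-1},U)$, computes $[\phi^k(C):C]=[\phi(C):C]^k$ by telescoping with Fact~\ref{gi}(a), and obtains the converse inequality from the antimonotonicity Lemma~\ref{antim} since $V\subseteq U$ --- no limits appear anywhere. You instead work at the finite level directly from the definition, via the subdividing identity $C_n(\phi^k,C_k(\phi,U))=C_{nk}(\phi,U)$ and the containment $C_{nk}(\phi,U)\subseteq C_n(\phi^k,U)$, then pass to limits along the subsequence $nk$. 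Both arguments are sound; yours is the classical one from the compact setting and has the advantage of requiring only that $\phi$ be a continuous endomorphism (preimages commute with intersections regardless of invertibility --- so, contrary to your closing remark, the automorphism hypothesis is not actually needed for the subdividing identity), whereas the paper's version is tied to automorphisms but illustrates the point of Section~\ref{prop-sec}, namely that the basic properties all fall out of the limit-free formula without any limit manipulation. One cosmetic note: your two displayed limits implicitly use that the full sequence $\frac{1}{m}\log[U:C_m(\phi,U)]$ converges (Lemma~\ref{logalpha}), so that its subsequence along $m=nk$ has the same limit $H_{top}(\phi,U)$; this is fine but worth saying explicitly.
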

\begin{proof}
Let $U\in\mathcal B(G)$ and denote $C=C(\phi^{-1},U)$. We start noting that $$C=C(\phi^{-k},V),\ \text{where}\ V=C_k(\phi^{-1},U).$$
 Moreover, $\phi^{n+1}(C)\supseteq \phi^n(C)$ and $[\phi^{n+1}(C):\phi^n(C)]=[\phi(C):C]$ for every $n\geq0$, applying the automorphism $\phi^{-n}$. Then Fact \ref{gi}(a) gives 
\begin{align*}
[\phi^k(C):C]&=[\phi^k(C):\phi^{k-1}(C)]\cdot[\phi^{k-1}(C):\phi^{k-2}(C)]\cdot\ldots\cdot[\phi(C):C]\\
&=[\phi(C):C]^k.
\end{align*}
Therefore, $$[\phi^k(C(\phi^{-k},V)):C(\phi^{-k},V)]=[\phi^k(C):C]=[\phi(C):C]^k;$$
so Theorem \ref{lf+} implies $$H_{top}(\phi^k,V)=k\cdot H_{top}(\phi,U).$$
This immediately gives $h_{top}(\phi^k)\geq k\cdot h_{top}(\phi)$.
By Lemma \ref{antim}, since $V\subseteq U$, we have also $$H_{top}(\phi^k,U)\leq H_{top}(\phi^k,V)=k \cdot H_{top}(\phi,U),$$
so the missing converse inequality $h_{top}(\phi^k)\leq k\cdot h_{top}(\phi)$ holds as well, and this concludes the proof.
\end{proof}

We verify now the continuity of the topological entropy with respect to inverse limits.

\begin{proposition}
Let $G$ be a totally disconnected locally compact group and $\phi:G\to G$ a topological automorphism. If $\{N_i:i\in I\}$ is a directed system of closed normal subgroups of $G$ with $\phi(N_i)=N_i$ and $\bigcap_{i\in I} N_i=\{1\}$, then $G\cong \varprojlim G/N_i$ and $h_{top}(\phi)=\sup_{i\in I} h_{top}(\overline \phi_i)$, where $\overline \phi_i:G/N_i\to G/N_i$ is the continuous endomorphism induced by $\phi$.
\end{proposition}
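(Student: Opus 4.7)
The plan is to divide the argument into two parts corresponding to the two conclusions of the proposition. The topological identification $G \cong \varprojlim G/N_i$ is essentially a standard fact about inverse systems of topological groups that does not involve $\phi$: the canonical homomorphism $g \mapsto (gN_i)_{i\in I}$ is continuous by the universal property, injective because $\bigcap_i N_i = \{1\}$, and under the hypotheses (which provide enough compactness through any open compact subgroup of $G$ together with the $\phi$-invariant $N_i$) is a topological isomorphism onto the inverse limit. I would simply invoke this and pass to the entropy computation.

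For the entropy equality, one inequality is straightforward: applying Proposition \ref{monotonicity}(b) to each $N_i$ yields $h_{top}(\overline\phi_i) \leq h_{top}(\phi)$ for every $i \in I$, and taking the supremum gives $\sup_i h_{top}(\overline\phi_i) \leq h_{top}(\phi)$.

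For the reverse inequality, I would exploit Corollary \ref{basesuff} together with the inverse limit structure. Concretely, let $\mathcal{B} := \{q_i^{-1}(V) : i \in I,\ V \in \mathcal{B}(G/N_i)\}$, where $q_i : G \to G/N_i$ denotes the canonical projection. Using $G \cong \varprojlim G/N_i$, one checks that $\mathcal{B}$ is a local base at $1$ consisting of members of $\mathcal{B}(G)$ (each $q_i^{-1}(V)$ is open compact because $V$ is open compact in $G/N_i$ and contains the image of the relevant compact open subgroup). For $U = q_i^{-1}(V) \in \mathcal{B}$ we have $N_i \subseteq U$, and the computation performed in the proof of Proposition \ref{monotonicity}(b) then yields the precise equality
\[
H_{top}(\phi, U) \;=\; H_{top}(\overline\phi_i, q_i(U)) \;=\; H_{top}(\overline\phi_i, V) \;\leq\; h_{top}(\overline\phi_i).
\]
Taking the supremum over $U \in \mathcal{B}$ and applying Corollary \ref{basesuff} gives $h_{top}(\phi) \leq \sup_i h_{top}(\overline\phi_i)$, which combined with the previous paragraph closes the circle.

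The main obstacle is the first part: producing the topological identification $G \cong \varprojlim G/N_i$ in the locally compact (rather than merely compact) setting and, equivalently, exhibiting the local base $\mathcal{B}$ of open compact subgroups each of which contains some $N_i$. Once this is in hand, the entropy statement follows cleanly from Proposition \ref{monotonicity}(b) and Corollary \ref{basesuff}, with no further limit arguments required.
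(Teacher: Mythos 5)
Your proposal is correct and follows essentially the same route as the paper: the easy inequality via Proposition \ref{monotonicity}(b), and the reverse inequality by rerunning the computation of Proposition \ref{monotonicity}(b) together with Theorem \ref{lf+} on subgroups $U$ containing some $N_i$. The only cosmetic difference is that you package the key topological input as ``the sets $q_i^{-1}(V)$ form a local base'' and invoke Corollary \ref{basesuff}, whereas the paper directly asserts that every $U\in\mathcal B(G)$ contains some $N_i$ --- an equivalent fact that both you and the paper leave to the identification $G\cong\varprojlim G/N_i$.
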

\begin{proof}
By Proposition \ref{monotonicity}(b) the inequality $h_{top}(\phi)\geq\sup_{i\in I}h_{top}(\overline\phi_i)$ holds. So let $U\in\mathcal B(G)$. There exists $i\in I$ such that $N_i\subseteq U$. Let $q_i:G\to G/N_i$ be the canonical projection. Since $N_i\subseteq C(\phi^{-1},U)$, by Proposition \ref{monotonicity}(b) and Fact \ref{gi}(d) we have 
\begin{equation*}\begin{split}
[\overline\phi_i(C(\overline\phi_i^{-1},q_i(U))):C(\overline\phi_i^{-1},q_i(U))]=[\overline\phi_i(q_i(C(\phi^{-1},U))):q_i(C(\phi^{-1},U))]\\
=[q_i(\phi(C(\phi^{-1},U))):q_i(C(\phi^{-1},U))]=[\phi(C(\phi^{-1},U)):C(\phi^{-1},U)].
\end{split}\end{equation*}
Theorem \ref{lf+} applies to this equality and gives $H_{top}(\overline\phi_i,q_i(U))=H_{top}(\phi,U)$. Therefore $H_{top}(\phi,U)\leq h_{top}(\overline\phi_i)$, hence we can conclude that $h_{top}(\phi)\leq\sup_{i\in I}h_{top}(\overline\phi_i)$.
\end{proof}

The following property is the weak Addition Theorem for the topological entropy.

\begin{theorem}
Let $G$ and $H$ be totally disconnected locally compact groups, $\phi:G\to G$ and $\psi:H\to H$ topological automorphisms. Then $h_{top}(\phi\times \psi)=h_{top}(\phi)+h_{top}(\psi)$.
\end{theorem}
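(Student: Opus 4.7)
The plan is to apply the limit-free formula of Theorem \ref{lf+} to the product automorphism $\phi\times\psi$ on $G\times H$, using as open compact subgroups the products $U\times V$ with $U\in\mathcal B(G)$ and $V\in\mathcal B(H)$. By van Dantzig's theorem, the family $\{U\times V:U\in\mathcal B(G),V\in\mathcal B(H)\}$ is a local base at $1$ in $G\times H$ consisting of open compact subgroups, so Corollary \ref{basesuff} allows us to compute $h_{top}(\phi\times\psi)$ as a supremum over this restricted family.

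The key observation is that the cotrajectory operation respects the product structure. Since $(\phi\times\psi)^{-n}(U\times V)=\phi^{-n}(U)\times\psi^{-n}(V)$, one sees immediately from the definition that
\[
C_n((\phi\times\psi)^{-1},U\times V)=C_n(\phi^{-1},U)\times C_n(\psi^{-1},V)
\]
for every $n$, and hence, taking the intersection,
\[
C((\phi\times\psi)^{-1},U\times V)=C(\phi^{-1},U)\times C(\psi^{-1},V).
\]
Applying $\phi\times\psi$ componentwise, the image $(\phi\times\psi)(C((\phi\times\psi)^{-1},U\times V))$ also splits as a product, and the index of a product subgroup in another product subgroup is the product of the two indices. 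Therefore, by Theorem \ref{lf+},
\[
H_{top}(\phi\times\psi,U\times V)=H_{top}(\phi,U)+H_{top}(\psi,V).
\]

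Finally, I take the supremum of both sides over $U\in\mathcal B(G)$ and $V\in\mathcal B(H)$. The right-hand side gives exactly $h_{top}(\phi)+h_{top}(\psi)$, since a supremum of a sum of independent quantities is the sum of the suprema; the left-hand side gives $h_{top}(\phi\times\psi)$ by Corollary \ref{basesuff}. There is essentially no serious obstacle here: the mildly technical points are verifying that the products $U\times V$ really form a local base of open compact subgroups (immediate from van Dantzig) and handling the case in which one of the two entropies is infinite, which is unproblematic since both sides are then $+\infty$ (pick $U$ with $H_{top}(\phi,U)$ arbitrarily large and any $V$). No recourse to the Haar-measure / modulus machinery of Section 3 is needed, since the formula of Theorem \ref{lf+} already absorbs all the measure-theoretic subtleties.
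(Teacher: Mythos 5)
Your proposal is correct and follows essentially the same route as the paper: restrict to the local base $\mathcal B(G)\times\mathcal B(H)$ via Corollary \ref{basesuff}, observe that $C((\phi\times\psi)^{-1},U\times V)=C(\phi^{-1},U)\times C(\psi^{-1},V)$, and apply Theorem \ref{lf+} together with the multiplicativity of indices for product subgroups. The paper's argument is just a terser version of yours.
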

\begin{proof}
Let $\eta=\phi\times\psi$. Since the family $\mathcal B=\mathcal B(G)\times\mathcal B(H)$ is a local base at $1$ of $G\times H$, in view of Corollary \ref{basesuff} it suffices to take $U\in\mathcal B(G)$ and $V\in\mathcal B(H)$, and to verify that $$H_{top}(\eta,U\times V)=H_{top}(\phi,U)+H_{top}(\psi,V);$$
this holds since $C(\eta^{-1},U\times V)=C(\phi^{-1},U)\times C(\psi^{-1},V)$, so
$$[\eta(C(\eta^{-1},U\times V)):C(\eta^{-1},U\times V)]=[\phi(C(\phi^{-1},U)):C(\phi^{-1},U)]\cdot[\psi(C(\psi^{-1},V)):C(\psi^{-1},V)],$$
and Theorem \ref{lf+} concludes the proof.
\end{proof}


\begin{thebibliography}{00}

\small{

\bibitem{AKM} R. L. Adler, A. G. Konheim, M. H. McAndrew, \emph{Topological entropy}, Trans. Amer. Math. Soc. 114 (1965) 309--319.

\bibitem{BDG} F. Berlai, D. Dikranjan, A. Giordano Bruno, \emph{Scale function vs Topological entropy}, Topology Appl. 160 (2013) 2314--2334.

\bibitem{B} R. Bowen, \emph{Entropy for group endomorphisms and homogeneous spaces}, Trans. Amer. Math. Soc. 153 (1971) 401--414.



\bibitem{DG-BT2} D. Dikranjan, A. Giordano Bruno, \emph{The Bridge Theorem for totally disconnected LCA groups}, to appear in Topology Appl.

\bibitem{DG-BT} D. Dikranjan, A. Giordano Bruno, \emph{The connection between topological and algebraic entropy}, Topology Appl. 159 (13) (2012) 2980--2989.

\bibitem{DG-limitfree} D. Dikranjan, A. Giordano Bruno, \emph{Limit free computation of entropy}, Rend. Istit. Mat. Univ. Trieste 44 (2012) 297--312.


\bibitem{DG-islam} D. Dikranjan, A. Giordano Bruno, \emph{Topological entropy and algebraic entropy for group endomorphisms}, Proceedings ICTA2011 Islamabad, Pakistan July 4-10 2011 Cambridge Scientific Publishers (2012) 133--214.

\bibitem{DGS} D. Dikranjan, A. Giordano Bruno, L. Salce, \emph{Adjoint algebraic entropy}, J. Algebra 324 (2010) 442--463.

\bibitem{DGSZ} D. Dikranjan, B. Goldsmith, L. Salce, P. Zanardo, \emph{Algebraic entropy of endomorphisms of abelian groups}, Trans. Amer. Math. Soc. {361} (2009) 3401--3434.











\bibitem{HR}  E. Hewitt, K. A. Ross, \emph{Abstract harmonic analysis I}, Springer-Verlag, Berlin-Heidelberg-New York, 1963.

\bibitem{hood} B. M. Hood,  \emph{Topological entropy and uniform spaces}, J. London Math. Soc. 8 (2) (1974) 633--641. 






\bibitem{P} L. S. Pontryagin, \emph{Topological Groups}, Gordon and Breach, New York, 1966.


         
\bibitem{St} L. N. Stoyanov, \emph{Uniqueness of topological entropy for endomorphisms on compact groups}, Boll. Un. Mat. Ital. B (7) 1 (3) (1987) 829--847.

\bibitem{vD} D. van Dantzig, \emph{Studien over topologische Algebra}, Dissertation, Amsterdam 1931.





\bibitem{W}  M. D. Weiss, \emph{Algebraic and other entropies of group endomorphisms}, Math. Systems Theory 8 (3) (1974/75) 243--248.

\bibitem{Willis} G. A. Willis, \emph{The structure of totally disconnected locally compact groups}, Math. Ann. 300 (2) (1994) 341--363. 


\bibitem{Willis2} G. A. Willis, \emph{Further properties of the scale function on a totally disconnected group}, J. Algebra 237 (1) (2001) 142--164. 





\bibitem{Y} S. Yuzvinski, \emph{Metric properties of endomorphisms of compact groups}, Izv. Acad. Nauk SSSR, Ser. Mat. {29} (1965) 1295--1328 (in Russian). English Translation: Amer. Math. Soc. Transl. (2) {66} (1968) 63--98.

}

\end{thebibliography}
\end{document}